\theoremstyle{plain}
\newtheorem{theorem}{Theorem}%[section]
\newtheorem{lemma}[theorem]{Lemma}%[section]
\newtheorem{corollary}[theorem]{Corollary}
\theoremstyle{definition}
\newtheorem{remark}[theorem]{Remark}
\newtheorem{problem}[theorem]{Problem}
\newcommand{\R}{\mathbb{R}}
\newcommand{\F}{\mathcal{F}}
\newcommand{\M}{\mathcal{M}}
\renewcommand{\S}{\Sigma}
\newcommand{\KG}{\mathrm{KG}}
\begin{document}
% -------------------------------------------------------------------%

\title[Embedding dimensions of simplicial complexes]{Embedding dimensions of simplicial complexes \\ on few vertices}

% -------------------------------------------------------------------%

% -------------------------------------------------------------------%

\author{Florian Frick}
\address[FF]{Dept.\ Math.\ Sciences, Carnegie Mellon University, Pittsburgh, PA 15213, USA \newline \indent Inst. Math., Freie Universit\"at Berlin, Arnimallee 2, 14195 Berlin, Germany}
\email{frick@cmu.edu} 

\author{Mirabel Hu} 
\address[MH]{Dept.\ Math.\ Sciences, Carnegie Mellon University, Pittsburgh, PA 15213, USA}
\email{zixinh@andrew.cmu.edu} 

\author{Verity Scheel}
\address[VS]{Dept.\ Math., Bard College, Annandale-on-Hudson, NY 12504, USA}
\email{vs5044@bard.edu}

\author{Steven Simon}
\address[SS]{Dept.\ Math., Bard College, Annandale-on-Hudson, NY 12504, USA}
\email{ssimon@bard.edu} 

\thanks{FF was supported by NSF grant DMS 1855591, NSF CAREER grant DMS 2042428, and a Sloan Research Fellowship.}

% -------------------------------------------------------------------%

\begin{abstract}
\small
We provide a simple characterization of simplicial complexes on few vertices that embed into the $d$-sphere. Namely, a simplicial complex on $d+3$ vertices embeds into the $d$-sphere if and only if its non-faces do not form an intersecting family. As immediate consequences, we recover the classical van Kampen--Flores theorem and provide a topological extension of the Erd\H os--Ko--Rado theorem. By analogy with F\'ary's theorem for planar graphs, we show in addition that such complexes satisfy the  rigidity property that continuous and linear embeddability are equivalent. 
\end{abstract}

\date{\today}
\maketitle

\section{Introduction and Statement of Results}

Planar graphs are characterized as those without a $K_5$- or $K_{3,3}$-minor~\cite{Wa37}. By a theorem of F\'ary~\cite{Fa48}, a graph can be continuously embedded into the plane if and only if there is an embedding where every edge is a straight line segment. For higher-dimensional simplicial complexes and embeddings into~$\R^d$, the situation is much more intricate: No efficient characterization of complexes that embed into $\R^d$ exists in general, algorithmically deciding the existence of an embedding can be -- depending on the dimensions of the complex and codomain -- difficult or even impossible (see, e.g.,~\cite{MaTaWa11,  MeRiSeTa20}), and an analogue of F\'ary's theorem asserting the equivalence of topological and linear embeddability fails even in dimension three~\cite{Br83, Sch10}.

Here we show that if one restricts attention to simplicial complexes on few vertices relative to dimension, then both a simple characterization of complexes that embed into~$\R^d$ as well as the equivalence of topological and linear embeddability can be salvaged. As a preliminary observation, note that a finite complex embeds into a $d$-dimensional sphere ~$S^d$ but not ~$\R^d$ if and only if it is homeomorphic to~$S^d$. As the only simplicial complex on $d+2$ vertices that does not embed into the $d$-sphere~$S^d$ is the $(d+1)$-dimensional simplex~$\Delta_{d+1}$, we shall therefore focus on deciding whether simplicial complexes on $d+3$ vertices embed into~$S^d$. 

 To avoid possible confusion, we note that throughout the paper we shall adopt the usual convention in topological combinatorics of not distinguishing between an abstract simplicial complex $\Sigma$ (that is, a family of subsets of $[n]=\{1,2,\ldots, n\}$ which is closed under taking subsets) and its geometric realization $|\Sigma|$ viewed as a subcomplex of the  $(n-1)$-dimensional simplex $\Delta_{n-1}$. In particular, faces of $\Sigma$ are identified with their corresponding simplices in $\Delta_{n-1}$, and by a continuous map defined on $\Sigma$ we mean one which is defined on the complex's geometric realization. 
 
Fix a positive integer $n$. For a family $\F$ of subsets of~$[n]$, let $\S(\F)$ denote the simplicial complex defined by $$\S(\F) = \{ \sigma \subset [n] \ : \ \tau \notin \F \ \text{for all} \ \tau \subset \sigma\}.$$ Thus $\S(\F)$ is the inclusion-maximal simplicial complex on ~$[n]$ defined by the rule that each member of $\F$ is a non-face of the complex. Note that every simplicial complex $\S$ is of this form by letting $\F$ be the family of inclusion-minimal non-faces of $\S$. Finally, a family $\F$ of subsets of $[n]$ is called \emph{intersecting} if $\sigma \cap \tau \ne \emptyset$ for any two elements $\sigma$ and $\tau$ of $\F$. 

Our characterization of complexes on $d+3$ vertices that embed into $S^d$ is surprisingly simple:

\begin{theorem}
\label{thm:int}
	Let $\F$ be a family of subsets of~$[d+3]$. Then $\S(\F)$ embeds into $S^d$ if and only if $\F$ is not an intersecting family.
\end{theorem}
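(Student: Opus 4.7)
The plan is to prove each implication separately.

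For the ``if'' direction, given disjoint $\sigma, \tau \in \F$, I would set $C = [d+3] \setminus (\sigma \cup \tau)$. Since $\{\sigma, \tau\} \subseteq \F$, the inclusion $\S(\F) \subseteq \S(\{\sigma, \tau\})$ reduces the problem to embedding the latter into $S^d$. This complex decomposes as a simplicial join $\partial \sigma * \partial \tau * \Delta_C$, where $\Delta_C$ is the full simplex on $C$ and $\partial \sigma, \partial \tau$ are the simplex boundaries, combinatorial spheres of dimensions $|\sigma|-2$ and $|\tau|-2$. Since $|\sigma|+|\tau|+|C| = d+3$, the join identities $S^p * S^q \cong S^{p+q+1}$ and $S^n * \Delta^m \cong D^{n+m+1}$ identify this as either $S^d$ (when $|C|=0$) or $D^d$ (when $|C|\geq 1$), both of which embed in $S^d$.

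For the harder ``only if'' direction, I would argue by contradiction via the van Kampen deleted-join obstruction. If $K := \S(\F)$ embeds into $S^d$, then there is a $\Z/2$-equivariant map from the deleted join $K_\Delta^{*2}$ (with swap action) to $S^d$ (with antipodal action). The plan is to show that the intersecting hypothesis forces $K_\Delta^{*2}$ to have the mod-$2$ cohomology of a $(d+1)$-sphere; since the swap action is free (a fixed face would require $A = B$ disjoint, hence $A = B = \emptyset$), the equivariant-cohomological Borsuk--Ulam theorem (Fadell--Husseini) then rules out any $\Z/2$-equivariant map to $S^d$, yielding the contradiction.

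The cohomology calculation is carried out inside $\partial \diamondsuit_{d+3} \cong S^{d+2}$, the boundary of the cross-polytope, in which $K_\Delta^{*2}$ naturally sits as a subcomplex (identifying faces $A \sqcup B'$ with pairs of disjoint subsets of $[d+3]$). A point with carrier $A \sqcup B'$ lies outside $K_\Delta^{*2}$ exactly when $A \supseteq \tau$ or $B \supseteq \tau$ for some $\tau \in \F$; the intersecting hypothesis precludes both holding simultaneously for the same carrier (which would force two disjoint members of $\F$), so the open complement splits as a swap-exchanged disjoint union $U^+ \sqcup U^-$. Each $U^\pm$ is a union of open stars $\mathrm{ost}(\tau \sqcup \emptyset)$ indexed by $\F$, and any intersection of such open stars equals $\mathrm{ost}((\tau_1 \cup \cdots \cup \tau_k) \sqcup \emptyset)$, always a nonempty face of $\partial \diamondsuit_{d+3}$ and hence contractible. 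The nerve theorem therefore gives $U^\pm \simeq \mathrm{pt}$, and Alexander duality on $S^{d+2}$ then yields $\tilde H_i(K_\Delta^{*2}) \cong \tilde H^{d+1-i}(U^+ \sqcup U^-)$, vanishing in degrees $i \leq d$ and equal to $\Z$ in degree $d+1$. The main obstacle I anticipate is verifying the good-cover hypothesis of the nerve theorem in the cross-polytope setting and correctly invoking the equivariant index inequality; each step is standard but demands careful bookkeeping.
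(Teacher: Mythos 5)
Your proof is correct, but the ``only if'' direction takes a genuinely different route from the paper. For the ``if'' direction you and the paper do essentially the same thing: observe $\S(\F)\subseteq\S(\{\sigma,\tau\})=\partial\sigma\ast\partial\tau\ast\Delta_C$, a sphere or a ball of dimension $d$. The paper's Lemma~\ref{lem:embedding} works with a full maximal matching rather than a single disjoint pair, which costs nothing extra and buys the sharper conclusion that $\S(\F)$ sits geodesically inside $S^{d+2-\nu(\F)}$ as a subcomplex of an inscribed polytope; that refinement is what later powers the F\'ary-type Theorem~\ref{thm:lin}, so your two-set version would need upgrading there. For the ``only if'' direction the paper deliberately avoids the deleted-join machinery: it extends $f\colon\S(\F)\to S^d$ radially to a map $F\colon\Delta_{d+2}\to\R^{d+1}$ with $F^{-1}(S^d)\subseteq\S(\F)$, applies the Topological Radon theorem to $F$ to obtain disjoint faces $\sigma,\tau$ of $\Delta_{d+2}$ with $F(\sigma)\cap F(\tau)\ne\emptyset$, and notes that the intersecting hypothesis forces both $\sigma$ and $\tau$ to already lie in $\S(\F)$. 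Your argument instead realizes $\S(\F)_\Delta^{*2}$ inside $\partial\diamondsuit_{d+3}\cong S^{d+2}$, uses the intersecting hypothesis to split the open complement into two swap-exchanged pieces $U^\pm$, shows each is contractible via the nerve theorem on open stars (and your worry about the good-cover hypothesis is unfounded: the intersections $\mathrm{ost}(\tau_1\sqcup\emptyset)\cap\cdots\cap\mathrm{ost}(\tau_k\sqcup\emptyset)=\mathrm{ost}\bigl((\tau_1\cup\cdots\cup\tau_k)\sqcup\emptyset\bigr)$ are always nonempty open stars, hence contractible), deduces via Alexander duality that the deleted join is $d$-acyclic mod $2$, and invokes the Fadell--Husseini index bound. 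This is precisely the Sarkaria-style computation the paper mentions and sidesteps; it is heavier but yields the same ``disjoint images must overlap'' conclusion, and correctly identifies the deleted join as a mod-$2$ homology $(d+1)$-sphere (the group in top degree should be $\Z/2$, not $\Z$, if you insist on working mod $2$, though integrally it is $\Z$ anyway). One small point you should make explicit: passing from an embedding into $S^d$ (rather than $\R^d$) to a $\Z/2$-map $\S(\F)_\Delta^{*2}\to S^d$ uses the map $(x,y,t)\mapsto tf(x)-(1-t)f(y)\in\R^{d+1}\setminus\{0\}$, which misses the origin because the two summands have different norms when $t\ne\tfrac12$ and $f$ is injective on disjoint faces when $t=\tfrac12$.
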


We show that Theorem~\ref{thm:int} is tight with respect to $d$. In one direction, suppose that $\F=\{\sigma_1, \sigma_2\}$ is a partition of $[d+3]$ into two non-empty sets. Then $\S(\F)=\partial \sigma_1 \ast \partial \sigma_2$ is the join of the boundaries of the respective simplices. Each $\partial \sigma_i$ is a sphere of dimension $|\sigma_i|-2$, so their join is a $d$-dimensional sphere. Thus $\S(\F)$ embeds into $S^d$ but not $S^{d-1}$. In the other direction, the family $\F=\{[d+3]\}$ is intersecting and $\Sigma(\F)=\partial \Delta_{d+2}$ is a $(d+1)$-dimensional sphere. Thus $\S(\F)$ does not embed into ~$S^d$ but does embed into ~$S^{d+1}$. 

As with the Hanani--Tutte theorem ~\cite{Ch34,Tu70} for non-planar graphs, our proof of Theorem~\ref{thm:int} in the case of non-embeddings shows that there are no \textit{almost embeddings} of $\Sigma(\F)$ into $S^d$, that is for any continuous map $f\colon \S(\F)\rightarrow S^d$ there are disjoint simplices of $\S(\F)$ whose images overlap (see Lemma~\ref{lem:non-embedding}). For example, suppose that $\F$ consists of all the $(d+2)$-subsets of $[2d+3]$, in which case $\S(\F)$ is the $d$-skeleton $\Delta_{2d+2}^{(d)}$ of the $(2d+2)$-simplex. As $\F$ is intersecting by the pigeonhole principle, Theorem~\ref{thm:int} recovers the classical van Kampen--Flores theorem ~\cite{Fl33, Va32} as an immediate corollary:

\begin{theorem} [Van Kampen--Flores theorem]
    \label{thm: VK-F} For any continuous map $f\colon\Delta_{2d+2}^{(d)} \rightarrow \R^{2d}$, there exist disjoint simplices $\sigma$ and $\tau$ of $\Delta_{2d+2}^{(d)}$ such that $f(\sigma)\cap f(\tau)\neq \emptyset$. \end{theorem}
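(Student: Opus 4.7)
The plan is to apply Theorem~\ref{thm:int} (or rather the strengthened form given in Lemma~\ref{lem:non-embedding}, as foreshadowed in the paragraph immediately after Theorem~\ref{thm:int}) with the dimension parameter $2d$ in place of $d$, applied to the family $\F$ consisting of all $(d+2)$-subsets of $[2d+3]$. The vertex count $(2d)+3$ matches the hypothesis of the theorem exactly.

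First I would verify that $\S(\F)$ is indeed $\Delta_{2d+2}^{(d)}$: a subset $\sigma \subseteq [2d+3]$ is a face of $\S(\F)$ precisely when no size-$(d+2)$ subset of $\sigma$ lies in $\F$, i.e., when $|\sigma| \leq d+1$, which describes the $d$-skeleton of the $(2d+2)$-simplex on vertex set $[2d+3]$. Next, the family $\F$ is intersecting by pigeonhole: for any $\sigma,\tau \in \F$,
\[ |\sigma \cap \tau| = |\sigma| + |\tau| - |\sigma \cup \tau| \geq 2(d+2) - (2d+3) = 1. \]

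Given $f\colon \Delta_{2d+2}^{(d)} \to \R^{2d}$, I would compose with the inclusion $\R^{2d}\hookrightarrow S^{2d}$ (realized via one-point compactification) to produce a continuous map $\bar f \colon \S(\F) \to S^{2d}$. Since $\F$ is intersecting, Lemma~\ref{lem:non-embedding} supplies disjoint simplices $\sigma,\tau$ of $\S(\F)=\Delta_{2d+2}^{(d)}$ with $\bar f(\sigma) \cap \bar f(\tau) \neq \emptyset$. Because $f$ avoids the point at infinity, this intersection lies in $\R^{2d}$, yielding $f(\sigma) \cap f(\tau) \neq \emptyset$, as required.

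The main obstacle is not in this derivation but in the hidden input: Theorem~\ref{thm:int} on its own only rules out the existence of \emph{some} embedding, while van Kampen--Flores demands the Hanani--Tutte-style statement that \emph{every} continuous map sends some pair of disjoint simplices to overlapping images. Lemma~\ref{lem:non-embedding} is exactly what bridges this gap; once it is in hand, the present corollary reduces to the two elementary combinatorial checks above (identification of the complex and verification that $\F$ is intersecting).
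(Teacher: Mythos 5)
Your proof is correct and matches the paper's own derivation exactly: both identify $\F$ as the family of all $(d+2)$-subsets of $[2d+3]$, check that $\S(\F) = \Delta_{2d+2}^{(d)}$ and that $\F$ is intersecting by inclusion--exclusion, and then invoke Lemma~\ref{lem:non-embedding} with dimension parameter $2d$. You are also right that Lemma~\ref{lem:non-embedding} (rather than the embeddability criterion of Theorem~\ref{thm:int} alone) is the needed input, and the passage from $\R^{2d}$ to $S^{2d}$ via the inclusion is the standard harmless step.
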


In a different direction, we show that Theorem ~\ref{thm:int} implies an extremal criterion for embeddability of simplicial complexes on few vertices. Recall that the celebrated Erd\H os--Ko--Rado theorem~\cite{EKR61} states that any intersecting family of $k$-element subsets of $[n]$ has size at most~$\binom{n-1}{k-1}$ provided $n\ge 2k$. As we will see, combining this with Theorem ~\ref{thm:int} quickly gives the following:

\begin{theorem}
\label{thm:EKR}
	Let $\S$ be a simplicial complex on $d+3$ vertices with fewer  than $\binom{d+2}{k}$ faces of dimension~${k-1}$, where $k\leq \lfloor \frac{d+3}{2}\rfloor$. Then $\S$ embeds into~$S^d$.
	\end{theorem}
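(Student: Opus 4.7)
The plan is to apply Theorem~\ref{thm:int} with $\F$ taken to be the family of all non-faces of $\S$, and to use the Erd\H os--Ko--Rado theorem to force non-intersectingness by a pigeonhole argument on $k$-subsets.

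First I would restrict attention to non-faces of a single size, namely $k$. Let $\mathcal{G}$ denote the family of $k$-element subsets of $[d+3]$ that are \emph{not} faces of $\S$. Since $\S$ has fewer than $\binom{d+2}{k}$ faces of dimension $k-1$, Pascal's identity $\binom{d+3}{k}=\binom{d+2}{k}+\binom{d+2}{k-1}$ yields
\[
|\mathcal{G}| \;>\; \binom{d+3}{k}-\binom{d+2}{k} \;=\; \binom{d+2}{k-1}.
\]
Next I would invoke the Erd\H os--Ko--Rado theorem on $[d+3]$. The hypothesis $k\le \lfloor (d+3)/2\rfloor$ is exactly the condition $d+3\ge 2k$ needed to apply EKR, which bounds any intersecting family of $k$-subsets of $[d+3]$ by $\binom{d+2}{k-1}$. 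Since $|\mathcal{G}|$ exceeds this bound, $\mathcal{G}$ cannot be intersecting, so there exist two disjoint $k$-subsets $\sigma,\tau$ which are both non-faces of $\S$.

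Finally I would note that if $\F$ denotes the family of \emph{all} non-faces of $\S$, then $\sigma,\tau\in\F$ with $\sigma\cap\tau=\emptyset$, so $\F$ is not intersecting. Theorem~\ref{thm:int} then produces an embedding of $\S=\S(\F)$ into $S^d$, completing the proof.

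There is essentially no hard step here: the content of the result is that the extremal EKR bound, combined with the embeddability criterion of Theorem~\ref{thm:int}, forces the existence of two disjoint non-faces once the complex is ``face-sparse'' in some dimension. The only place one must be careful is matching indices correctly — a face of dimension $k-1$ has $k$ vertices, and EKR requires $n\ge 2k$ with $n=d+3$, which is exactly the stated range of $k$.
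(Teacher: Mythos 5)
Your proof is correct and takes essentially the same route as the paper: pass to the family of non-faces, count the $k$-element non-faces via Pascal's identity, apply the Erd\H os--Ko--Rado theorem to conclude this family is not intersecting, and finish with Theorem~\ref{thm:int}. The only difference is cosmetic — you isolate the $k$-uniform subfamily $\mathcal{G}$ explicitly before invoking EKR, whereas the paper applies EKR directly to the $k$-element members of $\F$; the logic is identical.
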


We observe that Theorem~\ref{thm:EKR}, together with Theorem~\ref{thm:int}, is sufficient to recover the  Erd\H os--Ko--Rado theorem itself. Thus Theorem ~\ref{thm:EKR} can be seen as a topological generalization of the latter. To see this, suppose that $d+3\ge 2k$ and that $\F$ is a family of $k$-element subsets of $[d+3]$. If $\F$ contains more than $~\binom{d+2}{k-1}$ subsets, then the complex $\S(\F)$ has fewer than $\binom{d+3}{k} - \binom{d+2}{k-1}=\binom{d+2}{k}$ faces of dimension~${k-1}$. Thus $\S(\F)$ embeds into ~$S^d$ by Theorem~\ref{thm:EKR}, and so by Theorem~\ref{thm:int} $\F$ cannot be an intersecting family. Let us also note that the lower bound of Theorem~\ref{thm:EKR} is sharp, precisely because  the upper bound of the Erd\H os--Ko--Rado theorem is.

Our final result shows that the analogue of
F\'ary's theorem holds for simplicial complexes on few vertices. To state this formally, we say that an embedding of a simplicial complex $\S$ into~$\R^d$ is \emph{linear} if the image of each face of $\S$ is the convex hull of the image of its vertices, and likewise that an embedding of $\S$ into $S^d$ is \emph{geodesic} if the image of each face is geodesically convex in~$S^d$, that is, for any two points in the image any shortest path (in the isotropic round metric) connecting them is also in the image. We then have the following rigidity theorem:

\begin{theorem}
\label{thm:lin}
	Let $\S$ be a simplicial complex on $d+3$ vertices. Then $\S$ embeds into $\R^d$ (respectively,~$S^d$) if and only if it embeds linearly into~$\R^d$ (respectively, geodesically into~$S^d$). 
\end{theorem}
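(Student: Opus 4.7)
The ``if'' direction is immediate, since linear (respectively geodesic) embeddings are continuous. For the converse, assume $\S$ embeds into $S^d$. By Theorem~\ref{thm:int}, the family $\F$ of non-faces of $\S$ is not intersecting, and we fix disjoint $\sigma, \tau \in \F$; set $R = [d+3] \setminus (\sigma \cup \tau)$ and write $a = |\sigma|$, $b = |\tau|$, $c = |R|$. Because $\sigma, \tau \in \F$, the complex $\S$ is a subcomplex of the \emph{enveloping complex} $\widetilde\S := \{F \subseteq [d+3] : F \not\supseteq \sigma \text{ and } F \not\supseteq \tau\}$, so it suffices to produce a geodesic (resp.\ linear) embedding of $\widetilde\S$ and restrict.

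For the geodesic case I would decompose $\R^{d+1} = \R^{a-1} \oplus \R^{b-1} \oplus \R^c$ and place the $\sigma$-vertices at $(v_i, 0, 0)$, where $v_1, \ldots, v_a$ are the vertices of a regular $(a-1)$-simplex inscribed in the unit sphere of $\R^{a-1}$ and centered at the origin (so that the unique linear dependency among them is $\sum_i v_i = 0$); the $\tau$-vertices at $(0, w_j, 0)$ analogously; and the $R$-vertices at $(0, 0, e_k)$, the standard basis of $\R^c$. All of these positions are unit vectors and thus lie on $S^d$. For the linear case I would use the parallel decomposition $\R^d = \R^{a-1} \oplus \R^{b-1} \oplus \R^{c-1}$ with the $R$-vertices placed as vertices of a centered regular $(c-1)$-simplex; this handles $c \geq 1$.

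The verification I need to carry out is that for every pair of faces $F_1, F_2$ of $\widetilde\S$, the conical (resp.\ convex) hulls of their images meet exactly in the hull of $F_1 \cap F_2$. Suppose $\sum_{i \in F_1} \lambda_i p_i = \sum_{j \in F_2} \mu_j p_j$ with $\lambda, \mu \geq 0$ (and, in the linear case, with $\sum \lambda_i = \sum \mu_j = 1$). Define $\nu_i$ to be $\lambda_i$ on $F_1 \setminus F_2$, $-\mu_i$ on $F_2 \setminus F_1$, $\lambda_i - \mu_i$ on $F_1 \cap F_2$, and zero elsewhere; then $\sum_i \nu_i p_i = 0$ decouples into one equation per summand. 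On the $\sigma$-summand, the unique dependency $\sum v_i = 0$ forces $\nu_i$ to be a constant $\gamma$ on $\sigma$, and $\gamma$ must vanish: either $\sigma \setminus (F_1 \cup F_2) \neq \emptyset$ and $\nu$ is zero at some $i \in \sigma$ by construction, or $\sigma \subseteq F_1 \cup F_2$, in which case the hypothesis $\sigma \not\subseteq F_1, F_2$ forces $\sigma$ to meet both $F_1 \setminus F_2$ and $F_2 \setminus F_1$, on which $\nu$ has opposite signs, so $\gamma \geq 0$ and $\gamma \leq 0$. The $\tau$-summand is handled identically. The $R$-summand is handled in the spherical case by linear independence of the $e_k$'s, and in the linear case by the affine identity $\sum_i \nu_i = 0$ (which follows from $\sum \lambda_i = \sum \mu_j = 1$) together with the already-established vanishing of $\nu$ on $\sigma \cup \tau$. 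One concludes $\lambda = \mu$ on $F_1 \cap F_2$ and both vanish outside, as required.

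The main obstacle I foresee is the edge case $c = 0$ in the linear statement: then $\widetilde\S$ triangulates $S^d$ and the decomposition $\R^d = \R^{a-1} \oplus \R^{b-1} \oplus \R^{c-1}$ is unavailable. If $\S = \widetilde\S$ this is harmless since $\S$ does not embed into $\R^d$; if $\S \subsetneq \widetilde\S$, I would obtain the linear embedding by deforming the spherical realization of $\S$ into an open hemisphere of $S^d$ (possible because $|\S|$ is a proper closed subset) and then applying gnomonic projection from the antipodal point, which carries geodesic simplices to linear simplices and produces the desired linear embedding in $\R^d$.
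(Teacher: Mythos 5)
Your approach is essentially the paper's: you realize the ``enveloping complex'' $\widetilde\S = \S(\{\sigma,\tau\})$ as the join $\partial\Delta_\sigma \ast \partial\Delta_\tau \ast \Delta_R$ and show this join has a geodesic (resp.\ linear) realization. The paper packages this as Lemma~\ref{lem:embedding} (using a full maximal matching in $\F$ rather than just one disjoint pair, which gives a sharper codimension statement but is not needed for Theorem~\ref{thm:lin}), whereas you supply explicit coordinates and a hands-on verification of the face-intersection property. That verification is correct: the key point, that $\nu$ must vanish on $\sigma$ because $\sigma \not\subseteq F_1$ and $\sigma \not\subseteq F_2$ forces $\sigma$ to meet both $F_1\setminus F_2$ and $F_2\setminus F_1$ (or else to miss $F_1\cup F_2$ entirely), is exactly right, and the affine trick for the $R$-block in the linear case handles $c \geq 1$ cleanly, including $c=1$.

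The genuine gap is the $c = 0$ case of the linear statement. You assert that $|\S|$ can be ``deformed into an open hemisphere\dots possible because $|\S|$ is a proper closed subset.'' That justification does not hold as stated: a proper closed subcomplex of a geodesic triangulation of $S^d$ need not lie in any open hemisphere (already for the boundary of a regular tetrahedron projected to $S^2$, deleting one facet leaves a set of spherical measure $3\pi > 2\pi$), and one would need to argue that the realization can be \emph{chosen} so that the omitted facet is large, e.g.\ by moving the projection center toward that facet. This is fixable but requires work you have not done. The cleaner route -- and what the paper does -- is to observe that your very construction for the spherical case already exhibits $\widetilde\S$ as the boundary complex of a convex $(d{+}1)$-polytope, namely $P = \conv\{p_i\} = \Delta_\sigma \oplus \Delta_\tau$ (the free sum) when $c=0$. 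Since $\S$ embeds in $\R^d$ it is a \emph{proper} subcomplex of $\partial P \cong S^d$, so some facet $G$ of $P$ is not in $\S$, and the Schlegel diagram of $P$ based at $G$ gives the linear embedding of $\S$ into $\R^d$. Replacing your hemisphere deformation with this Schlegel-diagram step closes the gap and also unifies the $c=0$ and $c\geq 1$ cases if you prefer.
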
 

In fact, it will follow from Lemma~\ref{lem:embedding} that any simplicial complex on $d+3$ vertices which embeds into $S^d$ is actually a subcomplex of the boundary of a convex $(d+1)$-polytope on $d+3$ vertices inscribed into~$S^d$, that is, with all vertices lying on the unit sphere. While Mani~\cite{Ma72} showed that any triangulation of~$S^d$ on at most $d+4$ vertices is the boundary complex of a convex polytope, our construction in Lemma~\ref{lem:embedding} is nonetheless optimal in that there exist complexes on $d+4$ vertices that embed into~$S^d$ but which are not contained in any simplicial $(d+1)$-polytope; see Remark~\ref{rem:opt} for an example when $d=3$. It remains open whether Theorem~\ref{thm:lin} holds for simplicial complexes on $d+4$ or $d+5$ vertices; see Problem~\ref{prob:opt}. Moreover, we note that our result does not extend to linear embeddings of polyhedra. For instance, Barnette~\cite{Ba87} gives a simple example of a polyhedral $2$-complex on six vertices that embeds into~$\R^3$ but for which no linear embedding exists. 

We refer the reader to Matou\v sek~\cite{Ma08} for the basics about simplicial complexes and to Ziegler~\cite{Zi95} for the basics about polytopes.

\section{Proofs}

\subsection{Proof of Theorem~\ref{thm:int}}

To prove Theorem~\ref{thm:int}, we first show that any simplicial complex on $d+3$ vertices with intersecting non-faces cannot embed into a $d$-sphere.

\begin{lemma} 
\label{lem:non-embedding}
Let $\F$ be an intersecting family of subsets of $[d+3]$. For any continuous map $f\colon \S(\F)\rightarrow S^d$, there exist disjoint faces $\sigma$ and $\tau$ of $\S(\F)$ such that $f(\sigma)\cap f(\tau)\neq \emptyset$. 
\end{lemma}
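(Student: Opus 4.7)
My plan is to prove Lemma~\ref{lem:non-embedding} via a $\mathbb{Z}/2$-equivariant obstruction on the deleted join of $\Sigma(\mathcal{F})$, using the intersecting hypothesis to guarantee the obstruction does not vanish. First, I would reduce to the case where $\mathcal{F}$ is the antichain of minimal non-faces of $\Sigma(\mathcal{F})$; this subfamily is still intersecting and defines the same complex. The key combinatorial consequence is: \emph{for every partition $[d+3]=A\sqcup B$ into two nonempty parts, at least one of $A$ or $B$ is a face of $\Sigma(\mathcal{F})$.} Indeed, if both were non-faces, each would contain a minimal non-face from $\mathcal{F}$, and these minimal non-faces would be disjoint subsets of $[d+3]$, contradicting that $\mathcal{F}$ is intersecting. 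Equivalently, the combinatorial Alexander dual $\Sigma(\mathcal{F})^{\vee}$ is contained in $\Sigma(\mathcal{F})$.

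Next, suppose for contradiction that some continuous $f\colon\Sigma(\mathcal{F})\to S^d$ sends every pair of disjoint faces to disjoint subsets of $S^d$. Consider the deleted join $K:=[\Sigma(\mathcal{F})]^{*2}_{\Delta}\subseteq [\Delta_{d+2}]^{*2}_{\Delta}\cong S^{d+2}$, on which the swap of the two join factors acts freely (any fixed point of swap would require equal points lying in disjoint-support faces, which is impossible); this swap corresponds to the antipodal action on $S^{d+2}$. The map $f$ induces a $\mathbb{Z}/2$-equivariant map
\[
f *_{\Delta} f \colon K \longrightarrow (S^d * S^d) \setminus \Delta_{S^d},
\]
and the codomain is equivariantly homotopy equivalent to $S^d$ with the antipodal action via a standard deformation retract. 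A contradiction with the Borsuk--Ulam theorem will therefore follow as soon as the $\mathbb{Z}/2$-index of $K$ is shown to be at least $d+1$.

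For this index bound, I would argue that $K$ is $\mathbb{Z}/2$-equivariantly homotopy equivalent to $S^{d+1}$ with the antipodal action. By Bier's theorem, $\mathrm{Bier}(\Sigma(\mathcal{F}))=\Sigma(\mathcal{F})*_{\Delta}\Sigma(\mathcal{F})^{\vee}$ is a PL triangulated $(d+1)$-sphere sitting inside $K$ (the containment uses $\Sigma(\mathcal{F})^{\vee}\subseteq\Sigma(\mathcal{F})$ from above). Moreover, the partition property implies that the open top-dimensional simplices of $S^{d+2}$ \emph{missing} from $K$ come in antipodal pairs, each pair indexed by a non-face $C$ of $\Sigma(\mathcal{F})$ and its complementary face. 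I expect the main obstacle to be packaging this combinatorial data into an explicit equivariant deformation retract of $K$ onto a $(d+1)$-sphere, or equivalently a Mayer--Vietoris computation exhibiting a nontrivial mod-$2$ class in $H^{d+1}(K/(\mathbb{Z}/2))$. Small examples (e.g.\ $\Sigma(\mathcal{F})=\partial\Delta_{d+2}$, where $K$ is an equivariant annulus) already exhibit this equivariant homotopy equivalence, and I would aim to show the same phenomenon persists in general under the partition property.
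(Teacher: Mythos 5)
Your proposal is a genuinely different route from the paper's: it is the Sarkaria-style deleted-join/Borsuk--Ulam approach, which the authors explicitly acknowledge (``this result follows easily from a special case of Sarkaria's lower bound\ldots'') and then deliberately sidestep. The paper instead extends $f$ to a continuous map $F\colon\Delta_{d+2}\to\R^{d+1}$ (coning from barycenters of non-faces, so that $F^{-1}(S^d)\subseteq\Sigma(\F)$), applies the Topological Radon theorem to $F$, and uses the intersecting hypothesis in a one-line combinatorial step to push the resulting Radon pair down into $\Sigma(\F)$. That argument is elementary and self-contained, whereas yours requires nontrivial equivariant-topology input.

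More to the point, your proposal has a genuine gap, which you yourself flag: the index bound $\mathrm{ind}_{\mathbb{Z}/2}(K)\geq d+1$ is never established. The intermediate claim that $K=\Sigma(\F)^{*2}_{\Delta}$ is $\mathbb{Z}/2$-equivariantly homotopy equivalent to $S^{d+1}$ is actually false in general: if $\F$ consists of a single singleton (or is empty), $\Sigma(\F)$ is a full simplex on $d+2$ (resp.\ $d+3$) vertices and $K$ is the whole sphere $S^{d+1}$ (resp.\ $S^{d+2}$), not $S^{d+1}$ in the latter case; for other intersecting $\F$ the homotopy type can be more complicated still. Moreover, although the containment $\mathrm{Bier}(\Sigma(\F))=\Sigma(\F)*_{\Delta}\Sigma(\F)^{\vee}\subseteq K$ is correct (given $\Sigma(\F)^{\vee}\subseteq\Sigma(\F)$), the Bier sphere is \emph{not} a $\mathbb{Z}/2$-invariant subcomplex of $K$: the swap carries $\Sigma*_{\Delta}\Sigma^{\vee}$ to $\Sigma^{\vee}*_{\Delta}\Sigma$, which is a different subcomplex unless $\Sigma=\Sigma^{\vee}$. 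So the presence of a $(d+1)$-sphere inside $K$ does not by itself lower-bound the equivariant index, and the union $\mathrm{Bier}\cup\mathrm{swap}(\mathrm{Bier})$ is not generally a sphere either (already for $\Sigma(\F)=\partial\Delta_{d+2}$ it is a disjoint union of two $(d+1)$-spheres, not one). What you actually need is an argument that $K$ is $d$-connected, or a correct citation and application of the relevant Sarkaria-type index inequality with the exact constant checked; as written, the key step is missing. Your preliminary combinatorial observation ($\F$ intersecting $\iff$ for every partition $[d+3]=A\sqcup B$ at least one of $A,B$ is a face $\iff$ $\Sigma(\F)^{\vee}\subseteq\Sigma(\F)$) is correct and would be a reasonable starting point for either a direct connectivity computation or a nerve/Mayer--Vietoris argument, but that work remains to be done.
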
 

We shall provide two proofs of Lemma~\ref{lem:non-embedding}. The first proof gives Lemma~\ref{lem:non-embedding} as an easy corollary of Sarkaria's lower bound for dimensions of Euclidean embeddings via chromatic numbers of Kneser graphs~\cite{Sa90, Sa91, Ma08} (stated as Theorem~\ref{thm:Sarkaria}), while the second yields Lemma~\ref{lem:non-embedding} as a direct consequence of the more elementary Topological Radon theorem~\cite{BB79} (Theorem~\ref{thm:Radon} below). 

For the first proof, we set some preliminary notation. Given any family $\F$ of subsets of  $[n]$, we let $\KG(\F)$ denote its Kneser graph. Thus the vertices of $\KG(\F)$ are the elements of $\F$, with an edge connecting each pair of disjoint sets. As usual, we let $\chi(\KG(\F))$ denote the chromatic number of this graph. Thus $\chi(\KG(\F))\leq k$ means that the sets of $\F=\cup_{i=1}^k \F_i$ is the union of $k$ families $\F_1,\ldots, \F_k$, each of which is intersecting, and  in particular $\chi(\KG(\F))=1$ if and only if $\F$ is an intersecting family.

\vspace*{.1in}

\begin{theorem}[Sarkaria]
~\label{thm:Sarkaria} 
Let $\F$ be a family of subsets of $[d+k+2]$, and suppose that $\chi(\KG(\F))\leq k$. For any continuous map $\Sigma(\F)\rightarrow \R^d$, there exist disjoint simplices $\sigma$ and $\tau$ of $\Sigma(\F)$ such that $f(\sigma)\cap f(\tau)\neq \emptyset$. 
\end{theorem}

\begin{proof}[First proof of Lemma~\ref{lem:non-embedding}] Suppose that $\F$ is an intersecting family of subsets of $[d+3]$ and let $f\colon \Sigma(\F)\rightarrow S^d$. To apply Theorem~\ref{thm:Sarkaria}, let $\F'=\{\sigma\subset [d+4] \,\colon\, \sigma\in \F\}$, that is, consider the family $\F$ on ground set~$[d+4]$. Thus $\chi(\KG(\F'))=\chi(\KG(\F))=1$ and $\S(\F')=\S(\F)\ast \{d+4\}$. Viewing $S^d$ as the unit sphere in $\R^{d+1}$, we  define $f'=f\ast c \colon \Sigma(\F')=\Sigma(\F)\ast \{d+4\} \rightarrow S^d\ast \{0\}\subset \R^{d+1}$ as the join of $f$ and the map $c$ sending the vertex $d+4$ to the origin. Explicitly, $f'((1-t)x+t(d+4))=t\cdot f(x)$ for any $x\in \Sigma(\F)$ and any $t\in [0,1]$. By Theorem~\ref{thm:Sarkaria}, there exist disjoint faces $\sigma$ and $\tau$ in $\Sigma(\F')$ with $f'(\sigma)\cap f'(\tau)\neq \emptyset$. Since $\sigma$ and $\tau$ are disjoint, only one of them can contain $d+4$, say $d+4 \notin \sigma$. Thus $f'(\sigma)=f(\sigma) \subset S^d$ and so $f'(\sigma)\cap f'(\tau) \subset S^d$. If $\tau$ contains the vertex $d+4$, let $\tau'=\tau\setminus\{d+4\}$. As $(f')^{-1}(S^d)=\S(\F)$, we must have $f'(\sigma)\cap f'(\tau')\neq \emptyset$, and so it is no loss of generality to assume that $\tau$ is a face of~$\S(\F)$. Now both $\sigma$ and $\tau$ are faces of $\S(\F)$ and therefore $f(\sigma)\cap f(\tau)\neq \emptyset$. 
 \end{proof} 

Our second proof derives Lemma~\ref{lem:non-embedding} as a result of the following. 

\vspace*{.1in}

\begin{theorem}[Topological Radon theorem] 
~\label{thm:Radon} 
For any continuous map $f\colon\Delta_{d+1}\rightarrow \mathbb{R}^d$, there exist disjoint simplices $\sigma$ and $\tau$ of $\Delta_{d+1}$ such that $f(\sigma)\cap f(\tau)\neq \emptyset$. 
\end{theorem}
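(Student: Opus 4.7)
The plan is to derive the Topological Radon theorem from the Borsuk--Ulam theorem via the standard deleted-join / test-map paradigm. Let $\Diamond\subset\R^{d+2}$ be the cross-polytope with vertices $\pm e_1,\ldots,\pm e_{d+2}$; its boundary $\partial\Diamond$ is a triangulation of $S^{d+1}$ whose simplices are in bijection with ordered pairs $(\sigma,\tau)$ of disjoint subsets of $[d+2]$, not both empty, via $\sigma\mapsto\{+e_i:i\in\sigma\}$ and $\tau\mapsto\{-e_i:i\in\tau\}$. The antipodal action on $S^{d+1}$ swaps the two coordinates of the pair and is free. Regarding $[d+2]$ as the vertex set of $\Delta_{d+1}$, every point of $\partial\Diamond$ then admits a unique expression $(1-t)x+ty$, where $x$ lies in a face $\sigma$ of $\Delta_{d+1}$, $y$ lies in a disjoint face $\tau$, and $t\in[0,1]$ (with $x$ irrelevant when $t=1$ and $y$ irrelevant when $t=0$).

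Given a continuous map $f\colon\Delta_{d+1}\to\R^d$, I would next define an auxiliary test map $\tilde f\colon\partial\Diamond\to\R^{d+1}$ by
\[
\tilde f\bigl((1-t)x+ty\bigr)=\bigl((1-t)f(x)-tf(y),\;1-2t\bigr).
\]
This is continuous and $\mathbb{Z}/2$-equivariant with respect to the antipodal action on the target. Suppose for contradiction that no pair of disjoint faces $\sigma,\tau$ of $\Delta_{d+1}$ satisfies $f(\sigma)\cap f(\tau)\neq\emptyset$. Then $\tilde f$ avoids the origin: its last coordinate vanishes only at $t=1/2$, at which point the first $d$ coordinates vanish exactly when $f(x)=f(y)$, producing a forbidden intersection between images of disjoint faces. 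Normalizing $\tilde f$ therefore yields a continuous $\mathbb{Z}/2$-equivariant map $S^{d+1}\to S^d$, contradicting the Borsuk--Ulam theorem.

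The main obstacle is the bookkeeping in identifying $\partial\Diamond$ with this ``two-coloured'' join structure on $\Delta_{d+1}$ and verifying that the formula for $\tilde f$ is well defined and continuous on the degenerate strata $t\in\{0,1\}$, where the representation collapses to a point in just one copy. Once this framework is in place, the contradiction with Borsuk--Ulam is immediate and delivers the theorem.
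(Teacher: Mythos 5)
Your argument is correct and complete: the identification of the deleted join $(\Delta_{d+1})^{*2}_{\Delta}$ with the boundary of the $(d+2)$-dimensional cross-polytope, hence with $S^{d+1}$ carrying a free antipodal action, the test map $\tilde f\bigl((1-t)x+ty\bigr)=\bigl((1-t)f(x)-tf(y),\,1-2t\bigr)$, and the Borsuk--Ulam contradiction all check out; in particular the map is well defined and continuous on the strata $t\in\{0,1\}$ because the coefficient of the ill-determined point vanishes there and $f$ is bounded, and equivariance follows since swapping $(x,t,y)\mapsto(y,1-t,x)$ negates both blocks of coordinates. Note, however, that the paper does not prove this statement at all: it quotes the Topological Radon theorem as a known result of Bajm\'oczy and B\'ar\'any and uses it as a black box in the proof of Lemma~\ref{lem:non-embedding}. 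So your proof cannot coincide with ``the paper's''; what you have written is the standard deleted-join/test-map proof in the style of Matou\v sek's book. The original Bajm\'oczy--B\'ar\'any argument is slightly different in flavor: rather than building an equivariant map out of $S^{d+1}$, they construct a map $g\colon S^d\to\partial\Delta_{d+1}$ sending antipodal points to points supported on disjoint faces and apply Borsuk--Ulam directly to $f\circ g\colon S^d\to\R^d$. Both routes rest on Borsuk--Ulam; yours has the advantage of fitting the general configuration-space/test-map framework (and generalizes readily, e.g.\ to van Kampen--Flores-type statements), while theirs avoids the join bookkeeping you flag as the main overhead.
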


\begin{proof}[Second proof of Lemma~\ref{lem:non-embedding}] 

Given a continuous map $f\colon\Sigma(\F)\rightarrow S^d$, we extend it to a continuous map $f'\colon \Delta_{d+2}\rightarrow \mathbb{R}^{d+1}$ and apply Theorem~\ref{thm:Radon}. First, let $\Delta_{d+2}'$ denote the barycentric subdivision of $\Delta_{d+2}$. Recall that the vertices of $\Delta_{d+2}'$ are the barycenters of all faces of $\Delta_{d+2}$, and the subdivision decomposes $\Delta_{d+2}$ into interior disjoint ~$(d+2)$-dimensional simplices, each of whose vertex set consists of a barycenter from an $i$-dimensional face of $\Delta_{d+2}$ for each $0\leq i \leq d+2$. It follows that any $x$ in $\Delta_{d+2}$ can be uniquely expressed as a convex sum $x=(1-t)s+tc$ where $s$ lies in $\S(\F)$, $c$ is a convex combination of barycenters of pairwise incident non-faces of ~$\S(\F)$, and $0\le t\le 1$. We now define $f'\colon \Delta_{d+2}\rightarrow\mathbb{R}^{d+1}$ by letting $f'(s)=f(s)$ for each $s\in \Sigma(\F)$, $f'(b)=0$ for each barycenter of a non-face of $\Sigma(\F)$, and extending linearly. Thus $f'(x)=(1-t)f(s)$ for each $x=(1-t)s+tc$ as above. Here we again view $S^d$ as the unit sphere in~$\R^{d+1}$, and as before all that is needed is that $F$ continuously extends~$f$ with $(f')^{-1}(S^d)=\S(\F)$. 

By Theorem~\ref{thm:Radon}, there exist disjoint faces $\sigma$ and $\tau$  of $\Delta_{d+2}$ for which $f'(\sigma)\cap f'(\tau)\neq \emptyset$. As in the first proof of Lemma~\ref{lem:non-embedding}, we may assume that $\sigma$ and $\tau$ are inclusion-minimal faces with the property $f'(\sigma)\cap f'(\tau)\neq \emptyset$. If $\F$ is intersecting, then it is again easy to see that both $\sigma$ and $\tau$ must lie in $\S(\F)$. Indeed, $\sigma\cap\tau\neq \emptyset$ for any two non-faces $\sigma$ and $\tau$ of $\S(\F)$, while on the other hand $f'(\sigma)\cap f'(\tau)=\emptyset$ if $\sigma$ lies in $\S(\F)$ and $\tau$ does not. Thus $\sigma$ and $\tau$ are in $\S(\F)$, so $f(\sigma)\cap f(\tau)\neq \emptyset$. 
\end{proof}

To conclude the proof of Theorem~\ref{thm:int}, we show that $\S(\F)$ embeds in $S^d$ whenever $\F$ is not intersecting, and moreover that this embedding is geodesic.  To that end, recall that the \emph{matching number}~$\nu(\F)$ of an arbitrary family of subsets $\F$ of $[d]=\{1,\ldots, d\}$ is the maximum number of pairwise disjoint sets of~$\F$. In particular, $\nu(\F)=1$ means that $\F$ is intersecting. Thus the following lemma completes the proof of Theorem~\ref{thm:int}.

\begin{lemma}
\label{lem:embedding}
Let $\F$ be a family of subsets of $[d]$. Then $\S(\F)$ embeds geodesically into $S^{d-\nu(\F)-1}$. In fact, $\S(\F)$ is a subcomplex of the boundary of a convex $(d-\nu(\F))$-polytope with all $d$ vertices inscribed on the unit sphere.
\end{lemma}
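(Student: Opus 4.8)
The plan is to reduce to the case where $\F$ is a single maximum matching and then to build the polytope explicitly out of free sums and free joins of simplices. Let $\F_0 = \{A_1, \dots, A_\nu\} \subseteq \F$ be a maximum matching, so the $A_i$ are pairwise disjoint and $\nu(\F_0) = \nu(\F) = \nu$. Enlarging the family of missing faces only shrinks the complex, so $\S(\F) \subseteq \S(\F_0)$; hence it suffices to realize $\S(\F_0)$ geodesically as a subcomplex of the boundary of the desired inscribed polytope, and $\S(\F)$ will inherit this. Writing $R = [d] \setminus (A_1 \cup \dots \cup A_\nu)$ and unwinding the definition, $\sigma \in \S(\F_0)$ if and only if $\sigma$ contains no $A_i$, which means exactly that $\sigma \cap A_i$ is a proper subset of $A_i$ for each $i$ while $\sigma \cap R$ is arbitrary. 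Thus
\begin{equation*}
\S(\F_0) \;=\; \partial\Delta_{A_1} * \partial\Delta_{A_2} * \cdots * \partial\Delta_{A_\nu} * \Delta_R ,
\end{equation*}
the join of the boundary complexes of the simplices on the blocks $A_i$ with the full simplex on $R$.

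For the construction, realize each $\Delta_{A_i}$ as a regular $(|A_i|-1)$-simplex in $\R^{|A_i|-1}$ with circumcenter at the origin and circumradius $1$, and form the free sum $Q_0 := \Delta_{A_1} \oplus \cdots \oplus \Delta_{A_\nu}$. Standard facts about free sums give that $Q_0$ is a simplicial polytope whose vertex set is the disjoint union of the vertex sets of the $\Delta_{A_i}$ — all at distance $1$ from the origin, which lies in the interior — and whose boundary complex is precisely $\partial\Delta_{A_1} * \cdots * \partial\Delta_{A_\nu}$. Now let $Q := Q_0 * \Delta_R$ be the free join with the full simplex on $R$ (equivalently, the $|R|$-fold pyramid over $Q_0$), realized by placing $Q_0$ at "height $+1$" and a unit-circumradius regular realization of $\Delta_R$ at "height $-1$" in complementary coordinate blocks, so that all $d$ vertices of $Q$ lie on a common sphere about the origin and the origin stays in the interior; rescaling puts them on the unit sphere. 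Then $Q$ is a $(d-\nu)$-dimensional polytope on exactly $d$ inscribed vertices, and the face structure of a free join yields
\begin{equation*}
\partial Q \;\supseteq\; \partial Q_0 * \Delta_R \;=\; \S(\F_0) \;\supseteq\; \S(\F).
\end{equation*}

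For the geodesic statement, project $\partial Q$ radially from the center onto the circumscribed sphere. Since the center lies in the interior of $Q$ this is a homeomorphism onto $S^{d-\nu-1}$, and since every facet of $Q$ spans a hyperplane avoiding the center, the vertices of each facet lie in an open hemisphere; therefore each facet maps onto a geodesically convex spherical simplex, and these images agree on shared faces. This exhibits $\partial Q$ — hence its subcomplex $\S(\F)$ — as geodesically embedded in $S^{d-\nu-1}$.

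The steps that require genuine care are the two combinatorial identities used above: that the boundary complex of a free sum of simplices is the join of their boundaries, and that the boundary of a free join $P * P'$ contains $\partial P$ joined with the full face complex of $P'$. Both are classical but must be set up with the right conventions, and one must also check that the free-join realization can simultaneously be made inscribed with the center interior (a short computation matching the two heights). Finally, one should dispose separately of the degenerate inputs — when $\emptyset \in \F$, or when $\F$ contains singletons, so that some vertices of $[d]$ do not appear in $\S(\F)$ at all; in those cases the reduction removes the missing vertices and the construction above applies to the remaining ground set, with the absent vertices added back (or simply ignored) as convenient.
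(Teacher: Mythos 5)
Your proposal is correct and takes essentially the same route as the paper: reduce to a maximum matching, identify the resulting complex with the join $\partial\Delta_{A_1}\ast\cdots\ast\partial\Delta_{A_\nu}\ast\Delta_R$, and realize it geodesically by radial projection of an inscribed polytope. The only difference is that you make the inscribed $(d-\nu)$-polytope explicit as a free sum of regular simplices joined with $\Delta_R$, a construction the paper leaves implicit in a single sentence.
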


Before giving the proof of Lemma~\ref{lem:embedding}, we recall the following. First, the boundary $\partial \Delta_k$ of any $k$-simplex $\Delta_k$ is a triangulation of the $(k-1)$-dimensional sphere and can be realized with geodesically convex faces, for example, by radially projecting a regular $k$-simplex inscribed into the sphere. Secondly, if $\S_1$ and $\S_2$ are complexes which geodesically embed into spheres $S^{d_1}$ and $S^{d_2}$, respectively, then their join $\Sigma_1\ast \Sigma_2$ geodesically embeds in the sphere $S^{d_1+d_2+1}$. This follows by considering the natural homeomorphism $h\colon S^{d_1}\ast S^{d_2}\rightarrow S^{d_1+d_2+1}$ which sends each formal convex sum $(1-t)x_1\oplus t x_2$ of the join to $(\cos(\frac{\pi}{2}t)x_1, \sin(\frac{\pi}{2}t)x_2)\in \R^{d_1+1}\times \R^{d_2+1}$ on the sphere. Here  $x_1\in S^{d_1}, x_2\in S^{d_2},$ and $0\leq t\leq 1$. The image of the segment connecting $x_1$ and $x_2$ in $S^{d_1}\ast S^{d_2}$ is a (distance-minimizing) arc connecting $x_1$ and $x_2$ in $S^{d_1+d_2+1}$ along a great circle, so composing the join $f_1\ast f_2\colon \Sigma_1\ast \Sigma_2\to S^{d_1}\ast S^{d_2}$ of two geodesic embeddings $f_1\colon \Sigma_1\to S^{d_1}$ and $f_2\colon \Sigma_2\to S^{d_2}$ together with $h$ gives a geodesic embedding of $\Sigma_1\ast \Sigma_2$ into $S^{d_1+d_2+1}$.  

\begin{proof}[Proof of Lemma~\ref{lem:embedding}] Let $\nu=\nu(\F)$, let $\M=\{S_1,\ldots, S_\nu\}$ be a maximal collection of pairwise disjoint sets of $\F$, and let $m_i=|S_i|$ for each $i\in [\nu]$. Noting that $\S(\F)$ is a subcomplex of~$\S(\M)$, we first show that $\S(\M)$ embeds into the $(d-\nu-1)$-sphere. To that end, let $\Delta_{m_i-1}$ denote the simplex determined by $S_i$. Thus each $\partial \Delta_{m_i-1}$ is a $(m_i-2)$-sphere, where $\partial \Delta_{m_i-1}=\emptyset$ if $m_i=1$. Now let $r=d-(m_1+\ldots +m_\nu)$, and denote by $\Delta_{r-1}$ the simplex determined by those vertices of $[d]$ (if any) which are not covered by~$\M$. In particular, $\Delta_{r-1}=\emptyset$ if $r=0$. As the $S_i$ are pairwise disjoint, it is easily seen that $$\S(\M)=\partial \Delta_{m_1-1}\ast\cdots \ast \partial \Delta_{m_\nu-1} \ast \Delta_{r-1}$$ is the join of the boundaries of the $\Delta_{m_i-1}$ together with the join of $\Delta_{r-1}$. Now, $\partial \Delta_{m_1-1}\ast \cdots \ast \partial \Delta_{m_\nu -1}$ is a triangulated sphere of dimension $\sum_i (m_i-2)+\nu-1=d-r-\nu-1$, the boundary of a $(d-r-\nu)$-convex polytope $P_0$ on $d-r$ vertices. Thus $\S(\M)$ and therefore $\S(\F)$ is a subcomplex of the boundary of the join $P=P_0\ast \Delta_{r-1}$, which is a convex $(d-\nu)$-polytope on $d$ vertices.

 We now show that $\S(\M)$ geodesically embeds into $S^{d-\nu-1}$. As observed in the remarks prior to the proof of Lemma~\ref{lem:embedding}, each sphere $\partial \Delta_{m_i-1}$ geodesically embeds into $S^{m_i-2}$ with its vertices inscribed on~$S^{m_i-2}$. It follows that $\partial P_0$ also geodesically embeds into $S^{d-r-\nu-1}$, and moreover with its vertices inscribed. If $r>0$, view $\Delta_{r-1}$ as a face of $\partial \Delta_r$. Thus  $\Delta_{r-1}$ also geodesically embeds into the sphere $S^{r-1}$ with its vertices inscribed, and therefore $\S(\M)=\partial P_0\ast \Delta_{r-1}$ geodesically embeds into $S^{d-\nu-1}$ with its vertices (which are the vertices of the polytope $P=P_0\ast \Delta_{r-1}$) inscribed on the sphere. As $\S(\F)$ is a subcomplex of $\S(\M)$, this completes the proof.  
\end{proof} 

\subsection{Proof of Theorem~\ref{thm:EKR}}

Using Theorem~\ref{thm:int}, we now prove our topological generalization of the Erd\H os--Ko--Rado theorem~\cite{EKR61}.

\vspace*{.1in}

\begin{theorem}[Erd\H os--Ko--Rado theorem]
~\label{EKR} 
 Let $k \ge 2$ and $n \ge 2k$ be integers. If $\F$ is an intersecting family of $k$-element subsets of~$[n]$, then $|\F| \le \binom{n-1}{k-1}$.
\end{theorem}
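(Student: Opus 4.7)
The plan is to give Katona's cyclic double-counting proof of the Erd\H os--Ko--Rado theorem. View each permutation $\pi$ of $[n]$ as labeling positions $0, 1, \ldots, n-1$ on a cycle, and call any set of $k$ cyclically consecutive labels a \emph{$k$-arc} of $\pi$; each $\pi$ has exactly $n$ such arcs.

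The heart of the argument is a local estimate: for any single $\pi$, at most $k$ of the $n$ $k$-arcs of $\pi$ can belong to $\F$. To prove this, I would fix a $k$-arc $A \in \F$ occupying positions $\{0, 1, \ldots, k-1\}$ (after relabeling) and catalog the $2k-2$ other arcs meeting $A$: those starting at positions $1, 2, \ldots, k-1$ (overlapping $A$ on the right) and those starting at positions $n-k+1, n-k+2, \ldots, n-1$ (overlapping $A$ on the left via wrap-around). The key observation is that for each $i \in \{1, \ldots, k-1\}$, the arc starting at $i$ and the arc starting at $n-k+i$ are disjoint, where the hypothesis $n \ge 2k$ enters crucially. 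Since $\F$ is intersecting, at most one member of each of these $k-1$ pairs belongs to $\F$, so $\F$ contains at most $k$ arcs of $\pi$ overall (the $k-1$ pairs plus $A$ itself).

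Given the local estimate, I would finish by double-counting the set of pairs $(\pi, S)$ with $S \in \F$ a $k$-arc of $\pi$. Summed over the $n!$ permutations, the local estimate gives at most $k \cdot n!$ such pairs. On the other hand, for fixed $S \in \F$, the number of $\pi$ in which $S$ forms a $k$-arc is $n \cdot k! \cdot (n-k)!$: pick a cyclic starting position for $S$, then independently order $S$ and its complement. Comparing the two counts yields $|\F| \cdot n \cdot k! \cdot (n-k)! \le k \cdot n!$, which simplifies to $|\F| \le \binom{n-1}{k-1}$, as desired.

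The main obstacle is the pairing step: identifying the correct matching between the $2k-2$ competitor arcs and verifying disjointness, which is precisely where $n \ge 2k$ is needed (without it, the ``opposite'' arcs wrap around and meet). Once this pairing is in place, the rest is routine bookkeeping. A possible alternative would be the shifting proof of Erd\H os--Ko--Rado, which avoids cyclic permutations but requires more setup to show that intersectingness is preserved under the shift operations; Katona's argument seems cleaner here.
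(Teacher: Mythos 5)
Your write-up is a correct rendition of Katona's cycle argument: the pairing of the arc starting at position $i$ with the arc starting at position $n-k+i$ (for $i=1,\dots,k-1$) does exhaust the $2k-2$ arcs meeting the fixed arc $A$, the disjointness of each pair is exactly where $n\ge 2k$ enters, and the double count $|\F|\cdot n\cdot k!\,(n-k)! \le k\cdot n!$ simplifies to $|\F|\le\binom{n-1}{k-1}$ as you say. The comparison with the paper is a bit lopsided, though: the paper does not prove the Erd\H os--Ko--Rado theorem at all. It is stated only as a classical black box, cited from Erd\H os--Ko--Rado's original article, and then invoked once in the proof of the topological Theorem~\ref{thm:EKR} (the authors even remark that their Theorem~\ref{thm:EKR} together with Theorem~\ref{thm:int} recovers the combinatorial statement, but that its proof still uses it as an ingredient). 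So your contribution is not an alternative to an argument in the paper but a self-contained substitute for the citation; it would make the exposition independent of~\cite{EKR61} at the cost of roughly a page, whereas the paper's choice keeps the focus on the embedding results. If you wanted to match the paper's economy you could simply cite the result; if you do include a proof, Katona's argument is indeed the cleanest choice, and your version needs only the cosmetic caveat that the local estimate is trivial for permutations none of whose arcs lie in~$\F$.
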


\begin{proof}[Proof of Theorem~\ref{thm:EKR}]
Assume that $d+3\geq 2k$ and suppose that $\S$ is a simplicial complex on $d+3$ vertices with fewer than $\binom{d+2}{k}$ faces of dimension~${k-1}$. We have that $\S=\S(\F)$, where  $\F$ is the inclusion-minimal family of non-faces of $\S$. As $\S$ has fewer than $\binom{d+2}{k}$ faces of dimension~$k-1$, $\F$ has more than $\binom{d+3}{k} - \binom{d+2}{k} = \binom{d+2}{k-1}$ subsets of order $k$. It follows from Theorem~\ref{thm:EKR} that $\F$ cannot be intersecting, and therefore $\S$ must embed  into $S^d$ by Theorem~\ref{thm:int}. 
\end{proof}

\subsection{Proof of Theorem~\ref{thm:lin}} The proof of our F\'ary-type result is again a direct consequence of the lemmas above.

\begin{proof}[Proof of Theorem~\ref{thm:lin}] Let $f\colon\S\rightarrow S^d$ be a continuous embedding. Again, we have that $\S=\S(\F)$ where $\F$ is the inclusion-minimal set of non-faces of~$\S$. By Theorem~\ref{thm:int}, $\F$ is not intersecting, and thus $\S$ geodesically embeds into $S^d$ by Lemma~\ref{lem:embedding}. To finish the proof, suppose that $f\colon \S \rightarrow \R^d$ is an embedding. Composing with the inverse of the stereographic projection  gives a continuous embedding of $\S$ into the punctured $d$-sphere. By Lemma~\ref{lem:embedding}, $\S$ is a (necessarily proper) subcomplex of the boundary of a convex $(d+1)$-polytope~$P$ and therefore linearly embeds into~$\R^d$, for example by considering the Schlegel diagram of~$P$ with respect to a facet not contained in~$\Sigma$ and projecting through this facet. 
\end{proof}

\section{Open Problems and Concluding Remarks} 

We conclude with a problem and some comments concerning the optimality of Theorem~\ref{thm:lin} and Lemma~\ref{lem:embedding}. 

\begin{problem}
\label{prob:opt}
Brehm~\cite{Br83} constructed a triangulation of the M\"obius strip on nine vertices that does not linearly embed into~$\R^3$. Thus, embeddability and linear embeddability into $\R^d$ differ for complexes on $d+6$ vertices. As far as we know, it remains open whether these notions of embeddability coincide for simplicial complexes on $d+4$ or even $d+5$ vertices.
\end{problem}

It is tempting to think that the two triangulations of $S^3$ on eight vertices that are not boundary complexes of convex polytopes~\cite{GS67, Ba73} are good candidates to show that Theorem~\ref{thm:lin} cannot be extended to complexes on $d+5$ vertices. As shown by Mihalisin and Williams~\cite{MW02}, however, these non-polytopal $3$-spheres do linearly embed into~$\R^4$. As an application of  Theorem~\ref{thm:lin}, we provide a quick alternative proof of this fact in Corollary~\ref{cor:lin-sphere} below. Nonetheless, in Remark~\ref{rem:opt} below we show that one of these two exceptional spheres can be used to show that our construction in Lemma~\ref{lem:embedding} cannot be extended to complexes on $d+4$ vertices.  We note in passing that similar difficulties arise in extending other topological properties of $d$-dimensional simplicial complexes from those on $d+3$ vertices to those on $d+4$ vertices; see~\cite{CDGO22, CDGS20} for examples involving shellability.   

\begin{remark}
\label{rem:opt}
We construct a simplicial complex $\Sigma$ on seven vertices that embeds into~$\R^3$ but which is not contained in any convex $4$-polytope. Denoting the vertex set of $\Sigma$ by $\{v, 1,2,3,4,5,6\}$, the facet list of~$\Sigma$ is given by 
\begin{align*}
    &[1,2,3,4], [3,4,5,6], [5,6,1,2], \\
    &[v,1,2,3], [v,2,3,4], [v,3,4,5], [v,4,5,6], [v,5,6,1], [v,6,1,2], \\
    &[v,1,3,5], [v,2,4,6].
\end{align*}
The eight tetrahedra around the special vertex~$v$ glue to an octahedron with $v$ in the center. The three tetrahedra in the first line attach to an annulus of six triangles around this octahedron, namely those triangles in the octahedron around $v$ that appear in the second line above. Thus $\Sigma$ is a triangulation of a $3$-ball whose boundary is an octahedron. Moreover, every possible edge is present in~$\Sigma$. Supposing that $\Sigma$ is a subcomplex of a convex $4$-polytope~$P$, consider the seven vertices of~$P$ that are vertices of~$\Sigma$. Since $\Sigma$ is simplicial, we may assume that these seven vertices are in general position in~$\R^4$ and that their convex hull~$Q$ contains~$\Sigma$ as a subcomplex. Since the boundary of $\S$ is an octahedron, the facets of $Q$ not contained in~$\S$ form a triangulation of an octahedron without additional vertices. Any triangulation of an octahedron without additional vertices must introduce one of the diagonals of the octahedron. Thus there is an edge in $Q$ which is not in~$\S$. This is a contradiction since all edges among the seven vertices are present in~$\S$.
\end{remark}

The complex $\S$ in Remark~\ref{rem:opt} is a subcomplex of ``Barnette's sphere''~$\mathcal B$ (see ~\cite{Ba73}), one of two triangulations of the $3$-sphere on eight vertices that is not the boundary complex of a convex $4$-polytope. The triangulation $\mathcal B$ is obtained from $\S$ by coning off its boundary, that is, add a new vertex~$w$, and declaring any triangle on the boundary of the $3$-ball $\S$ together with the vertex $w$ to be a new tetrahedron of~$\mathcal B$. The other nonpolytopal $3$-sphere on eight vertices is the ``Gr\"unbaum--Sreedharan sphere'' $\mathcal{GS}$; see~\cite{GS67}. Mihalisin and Williams~\cite{MW02} show\footnote{The facet list of $\mathcal B$ given in~\cite{MW02} contains a typo: The triangle $abc$ is contained in three tetrahedra.} that $\mathcal B$ and $\mathcal{GS}$ linearly embed into~$\R^4$. In the case of $\mathcal{GS}$ this already follows from the original paper~\cite{GS67} since there a diagram of $\mathcal{GS}$ is given, that is, $\mathcal{GS}$ minus a facet is linearly embedded into~$\R^3$.

As a direct consequence of Theorem~\ref{thm:lin} we derive the following corollary, which generalizes the result of Mihalisin and Williams to all dimensions:

\begin{corollary}
\label{cor:lin-sphere}
    Let $\S$ be a simplicial complex homeomorphic to $S^d$ on $d+5$ vertices with at least one missing edge. Then $\S$ admits a linear embedding into~$\R^{d+1}$.
\end{corollary}

\begin{proof}
    Let $v$ and $w$ be two vertices of $\S$ that are not connected by an edge. By removing all faces incident to $v$ or $w$ from $\S$, we obtain a complex $\S'$ on $d+3$ vertices. Since $\S$ is a $d$-sphere, $\S'$ embeds into~$\R^d$. By Theorem~\ref{thm:lin}, there is therefore a linear embedding of $\S'$ into~$\R^d$. Viewing $\R^d$ as a hyperplane in~$\R^{d+1}$, we reinsert $v$ and $w$ by placing $v$ above this hyperplane and $w$ below it. This results in a linear embedding of $\S$ into~$\R^{d+1}$, thereby completing the proof.
\end{proof}

In particular, since $v$ and $w$ are not connected by an edge in $\mathcal B$, this provides a linear embedding of $\mathcal B$ into~$\R^4$. The complex $\mathcal{GS}$ has no missing edges, so Corollary~\ref{cor:lin-sphere} does not apply directly. Nevertheless we may use Theorem~\ref{thm:lin} to construct a linear embedding of $\mathcal{GS}$ into~$\R^4$. In the notation of~\cite{GS67}, where $\mathcal{GS}$ is denoted by~$\mathcal M$, the edge $(1,3)$ is surrounded by three tetrahedra: $[1,2,3,4], [1,2,3,7], [1,3,4,7]$. The triangle $(2,4,7)$ is not present. Thus we may perform the PL-move that replaces the three tetrahedra above by $[1,2,4,7], [2,3,4,7]$. This removes the edge $(1,3)$, and deleting vertices $1$ and $3$ yields a complex that embeds linearly into~$\R^3$ by Theorem~\ref{thm:lin}. As in the proof of Corollary~\ref{cor:lin-sphere} add back these two vertices in such a way that the line connecting them intersects the triangle $(2,4,7)$. We can thus revert the PL-move to obtain a linear embedding of~$\mathcal{GS}$.

\section*{Acknowledgements} The authors thank the anonymous referees for numerous helpful comments and suggestions which improved the exposition of the paper.\\

\noindent
\textbf{Data availability statement.} Data sharing is not applicable to this article as no datasets were generated or analyzed during the current study.

\vspace*{.1in}

\noindent \textbf{Competing interests.} The authors declare that they have no conflicts of interest.

\bibliography{bib}{}

\begin{thebibliography}{}

\bibitem{BB79} E. Bajm\'oczy and I. B\'ar\'any. On a common generalization of Borsuk's and Radon's theorem. \emph{Acta Math. Hung.}, 34: 347--350, 1979. 

\bibitem{Ba73} D. Barnette. The triangulations of the 3-sphere with up to 8 vertices. \emph{J. Combin. Theory, Ser. A}, 14(1): 37--53, 1973.

\bibitem{Ba87} D. Barnette. Two ``simple'' 3-spheres. \emph{Discrete Math.}, 67(1): 97--99, 1987.

\bibitem{Br83} U. Brehm. A nonpolyhedral triangulated M\"obius strip. \emph{Proc. Amer. Math. Soc.}, 89(3): 519--522, 1983.  

 \bibitem{Ch34} Ch. Chojnacki. \"Uber wesentlich unpl\"attbare Kurven im dreidimensionalen Raume. \emph{Fund. Math.}, 23(1): 135--142, 1934. 

\bibitem{CDGO22} M. Coleman, A. Dochtermann, N. Geist, and S. Oh. Extending and completing shellings of vertex decomposable complexes, \emph{SIAM J. Discrete Math.}, 36(2): 1291--1305, 2022.


\bibitem{CDGS20} J. Culbertson, A. Dochtermann, D. Guralnik, and P. Stiller.  Extendable shellability of $d$-dimensional complexes on $d + 3$ vertices, \emph{Electron. J. Combin.} 27(3): P3.46, 2020.
 
\bibitem{EKR61} P. Erd\H os, C. Ko, and R. Rado. Intersection theorems for systems of finite sets. \emph{Q.J. Math.}, 12(1): 313--320, 1961.

\bibitem{Fa48} I. F\'ary. On straight-line representation of planar graphs. \emph{Acta Sci. Math. (Szeged)}, 12: 229--233, 1948.

\bibitem{Fl33} A. Flores. \"Uber $n$-dimensionale Komplexe die im $\R^{2n+1}$ absolut selbstverschlungen sind. \emph{Ergeb. Math. Kolloq.}, 6: 4--7, 1933.

\bibitem{GS67} B. Gr\"unbaum and V.P. Sreedharan. An enumeration of simplicial $4$-polytopes with $8$ vertices. \emph{J. Combin. Theory}, 2(4): 437--465, 1967.

\bibitem{Ma72} P. Mani. Spheres with few vertices. \emph{J. Comb. Theory, Ser. A}, 13(3): 346--352, 1972.

\bibitem{Ma08} J. Matou\v sek. \emph{Using the Borsuk–Ulam theorem: Lectures on topological methods in combinatorics and geometry.} Springer-Verlag, 2008.

\bibitem{MaTaWa11} J. Matou\v sek, M. Tancer, and U. Wagner. Hardness of embedding simplicial complexes in $\mathbb{R}^d$. \emph{J. Eur. Math. Soc.}, 13(2): 259--295, 2011.

\bibitem{MeRiSeTa20} A. de Mesmay, Y. Rieck, E. Sedgwick, and M. Tancer. Embeddability in $\R^3$ is NP-hard. \emph{J. ACM}, 67(4): 1--29, 2020.

\bibitem{MW02} J. Mihalisin and G. Williams. Nonconvex embeddings of the exceptional simplicial 3-spheres with 8 vertices. \emph{J. Combin. Theory, Ser. A}, 98(1): 74--86, 2002.

\bibitem{Sa90} K. S. Sarkaria. A generalized Kneser conjecture, \textit{J. Combin. Theory, Ser. B}, 49(2) No. 2: 236--240, 1990.

\bibitem{Sa91} K. S. Sarkaria, A generalized van Kampen--Flores theorem, \textit{Proc. Amer. Math. Soc.}, 11: 559--565, 1991.

\bibitem{Sch10} L. Schewe. Nonrealizable minimal vertex triangulations of surfaces: showing nonrealizability using oriented matroids and satisfiability solvers. \emph{Discrete Comput. Geom.}, 43(2): 289--302, 2010.

\bibitem{Tu70} W. T. Tutte. Toward a theory of crossing numbers. \emph{J. Comb. Theory}, 8: 45--53, 1970.

\bibitem{Va32} E. van Kampen. Komplexe in euklidischen R\"aumen. \emph{Abh. Math. Semin. Univ. Hamb.}, 9: 72--78, 1932.

\bibitem{Wa37} K. Wagner. \"Uber eine Eigenschaft der ebenen Komplexe. \emph{Math. Ann.}, 114: 570--590, 1937.

\bibitem{Zi95} G. M. Ziegler. \emph{Lectures on polytopes}. Grad. Texts Math. 152, Springer-Verlag, New York, 1995. 
\end{thebibliography}
\bibliographystyle{plain}

\end{document}